\documentclass[11pt,twoside]{article}
\usepackage{a4}
\usepackage{amssymb,amsmath,amsthm,latexsym}
\usepackage{amsfonts}
  \usepackage{amsfonts}
\usepackage{graphicx}
\usepackage{hyperref}

\newtheorem{theorem}{Theorem}[section]

\newtheorem{proposition}[theorem]{Proposition}
\newtheorem{remark}[theorem]{Remark}

\usepackage{enumerate}

\voffset=-12mm
\mathsurround=2pt
\parindent=12pt
\parskip= 4.5 pt
\lineskip=3pt
\oddsidemargin=10mm
\evensidemargin=10mm
\topmargin=55pt
\headheight=12pt
\footskip=30pt
\textheight 8.1in
\textwidth=150mm
\raggedbottom
\pagestyle{myheadings}
\hbadness = 10000
\tolerance = 10000

\vspace{5cm}

  \begin{document}
  
  \label{'ubf'}  
\setcounter{page}{1}                                 

\markboth {\hspace*{-9mm} \centerline{\footnotesize \sc
    Upper bounds for the prime divisors of friends of 10  }
                 }
                { \centerline                           {\footnotesize \sc  
         Sourav Mandal and Sagar Mandal                                                 } \hspace*{-9mm}              
               }

\vspace*{-2cm}

\begin{center}
{ 
       {\large \textbf { \sc Upper bounds for the prime divisors of friends of 10
                               }
       }
\\

\medskip

{\sc Sourav Mandal }\\
{\footnotesize Department of Mathematics, Ramakrishna Mission Vivekananda Educational and Research Institute Belur Math, Howrah, West Bengal 711202, India}\\

{\footnotesize e-mail: {\it souravmandal1729@gmail.com}}
}

{\sc Sagar Mandal }\\
{\footnotesize Department of Mathematics and Statistics, Indian Institute of Technology Kanpur\\ Kalyanpur, Kanpur, Uttar Pradesh 208016, India}\\

{\footnotesize e-mail: {\it sagarmandal31415@gmail.com}}

\end{center}

\thispagestyle{empty}

\hrulefill

\begin{abstract}  
{\footnotesize  In this paper we propose necessary upper bounds for the second, third and fourth smallest prime divisors of friends of 10 based on the number of distinct prime divisors of it.
}
 \end{abstract}
 \hrulefill

{\small \textbf{Keywords: Abundancy Index, Sum of Divisors, Friendly Numbers, Solitary Numbers.} }

\indent {\small {\bf 2000 Mathematics Subject Classification: 11A25} }

\section{Introduction}
For any positive integer $n$, $I(n)=\frac{\sigma(n)}{n}$ is called the abundancy index of $n$ where $\sigma(n)$ is sum of  positive divisors of $n$. A positive integer $n$ is said to be friendly number if there exist a positive integer $m$ other than $n$ having same abundancy index as $n$ i.e., $I(n)=I(m)$ and in that case we say $m$ is a friend of $n$. If $n$ has no friend then we say $n$ is a solitary number. $10$ is the smallest positive integer whose classification in terms of solitary and friendly is unknown. J. Ward \cite{ward2008does} proved that if 10 has a friend then it is an odd square having at least six distinct prime divisors with $5$ being the least one and there exists a prime divisor congruent to $1$ modulo $3$. In \cite{SS2024} it has been proven that the friendly number of 10 necessarily has at least seven distinct prime divisors and has two primes $p$ and $q$ not necessarily distinct such that $p\equiv 1 \pmod6$ and $q\equiv 1 \pmod{10}$, also it has prime divisor $r$ such that $2a+1\equiv 0 \pmod{f}$ where $f$ is the least odd positive integer greater than $1$ satisfying $5^f\equiv 1 \pmod r$, provided $5^{2a}\mid\mid n~,~a\in\mathbb{Z^+}$(where $n$ is the friendly number of 10). In this paper, we use $\omega(n)$ to denote the number of distinct prime divisors of $n$ and $\lceil . \rceil$ to denote the ceiling function, and we prove the following,\\
\begin{theorem}\label{thm 1.1}
Let $n$ be a friend of 10 and $q_2$ be the second  smallest prime divisor of $n$. Then necessarily \\
\begin{align*}
    7\leq q_2 < {\lceil\frac{7\omega(n)}{3}\rceil}\biggl\{\log\left({\lceil\frac{7\omega(n)}{3}\rceil}\right)+2\log\log\left({\lceil\frac{7\omega(n)}{3}\rceil}\right)\biggl\}.
\end{align*}
\end{theorem}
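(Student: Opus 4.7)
The lower bound $q_2 \ge 7$ is immediate: Ward's theorem forces the smallest prime divisor of $n$ to be $5$, and since $n$ is odd we must have $q_2 \ge 7$. For the upper bound the plan is to extract a strong lower bound on $\prod_{i \ge 2} q_i/(q_i-1)$ from the abundancy equation and then to squeeze $q_2$ by comparison with a block of consecutive primes. Starting from $I(n) = I(10) = 9/5$ and writing $n = 5^{2a}\prod_{i\ge 2} q_i^{a_i}$, multiplicativity together with $I(5^{2a}) < \sum_{k \ge 0} 5^{-k} = 5/4$ and the standard bound $I(q^{a}) < q/(q-1)$ will yield
\[
\prod_{i=2}^{\omega(n)} \frac{q_i}{q_i-1} \;>\; \frac{9}{5}\cdot\frac{4}{5} \;=\; \frac{36}{25}.
\]

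Setting $K = \lceil 7\omega(n)/3 \rceil$ and letting $p_k$ denote the $k$-th prime, the next step is to show $q_2 \le p_K$ by contradiction. If instead $q_2 \ge p_{K+1}$, then because the $q_i$ are strictly increasing primes we have $q_i \ge p_{K+i-1}$ for $i \ge 2$, and since $x/(x-1)$ is decreasing,
\[
\prod_{i=2}^{\omega(n)} \frac{q_i}{q_i-1} \;\le\; \prod_{j=K+1}^{K+\omega(n)-1} \frac{p_j}{p_j-1}.
\]
Rewriting the right-hand side as $\prod_{p \le p_{K+\omega(n)-1}}(1-1/p)^{-1} \big/ \prod_{p \le p_K}(1-1/p)^{-1}$ and invoking Rosser--Schoenfeld type effective forms of Mertens' third theorem, together with explicit two-sided bounds on $p_k$, this ratio should drop below $36/25$ precisely when $K \ge \lceil 7\omega(n)/3\rceil$ (using $\omega(n) \ge 7$, known from \cite{SS2024}). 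This produces the required contradiction. Once $q_2 \le p_K$ is established, Rosser's explicit bound $p_K < K(\log K + 2\log\log K)$, valid for $K \ge 4$ (and here $K \ge 17$), immediately yields the theorem.

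The hard part will be the Mertens-type estimate. An asymptotic comparison gives a ratio of order $\log p_{K+\omega-1}/\log p_K$, and the constant $7/3$ should emerge from the requirement $\log p_{K+\omega-1}/\log p_K < 36/25 = 1.44$ combined with $p_k \asymp k \log k$. Making this rigorous calls for careful control of the effective error terms in Mertens' formula, and perhaps a small separate numerical check for the smallest admissible values of $\omega(n)$ where the asymptotic constants are still loose. The other ingredients---the abundancy reduction, the monotonicity of $x/(x-1)$, and the application of Rosser's explicit prime bound---are routine.
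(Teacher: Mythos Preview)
Your proposal is a sketch rather than a proof: the crucial Mertens-type estimate is never carried out, and your heuristic for why the constant $7/3$ should appear is actually wrong. For any fixed $c>0$, setting $K=\lceil c\,\omega(n)\rceil$, the asymptotic ratio $\log p_{K+\omega(n)-1}/\log p_K$ tends to $1$ as $\omega(n)\to\infty$, not to anything near $36/25$; so the constraint you wrote down does not single out $7/3$ at all. If an effective Mertens argument works, the constant would have to be pinned down by small values of $\omega(n)$ together with the explicit error constants in Rosser--Schoenfeld, and you have analyzed neither. As it stands there is no proof here, only a programme.

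The paper bypasses Mertens entirely with a one-line elementary device you are missing. Since $p_k>k$ for every $k$, monotonicity of $x/(x-1)$ gives $p_k/(p_k-1)<k/(k-1)$, and hence, assuming $q_2\ge p_K$ (so $q_i\ge p_{K+i-2}$),
\[
\prod_{i=2}^{\omega(n)}\frac{p_{K+i-2}}{p_{K+i-2}-1}\;<\;\prod_{i=2}^{\omega(n)}\frac{K+i-2}{K+i-3}\;=\;\frac{K+\omega(n)-2}{K-1},
\]
a telescoping product. With $K=\lceil 7\omega(n)/3\rceil$ this last fraction is strictly below $10/7$ (a short case split on $\omega(n)\bmod 3$), whence $I(n)<\tfrac{5}{4}\cdot\tfrac{10}{7}=\tfrac{25}{14}<\tfrac{9}{5}$, the contradiction. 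This is where $7/3$ genuinely comes from: one needs $(c\omega+\omega)/(c\omega)=(c+1)/c\le 36/25$, i.e.\ $c\ge 25/11$, and $7/3$ is a convenient rational just above that threshold. No effective prime-counting input is needed beyond the trivial $p_k>k$ and, at the very end, Rosser's upper bound $p_K<K(\log K+2\log\log K)$.
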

\begin{theorem}\label{thm 1.2}
Let $n$ be a friend of 10 and $q_3$ be the third smallest prime divisor of $n$. Then necessarily\\ 
\begin{align*}
    11\leq q_3 < {\lceil\frac{180\omega(n)}{41}\rceil}\biggl\{\log\left({\lceil\frac{180\omega(n)}{41}\rceil}\right)+2\log\log\left({\lceil\frac{180\omega(n)}{41}\rceil}\right)\biggl\}.
\end{align*}
\end{theorem}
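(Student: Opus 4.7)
The lower bound $q_3 \geq 11$ is immediate: Theorem~\ref{thm 1.1} gives $q_2 \geq 7$, and since $n$ is odd, $q_3$ is an odd prime strictly greater than $7$, hence $q_3 \geq 11$.

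For the upper bound, the plan is to exploit the strict inequality $I(n) < \prod_{p \mid n} p/(p-1)$, which, since $I(n) = 9/5$, gives $\prod_{p \mid n} p/(p-1) > 9/5$. Writing the prime divisors of $n$ as $5 = q_1 < q_2 < \cdots < q_{\omega(n)}$ and peeling off the factors corresponding to $q_1 = 5$ and $q_2$, I would invoke $q_2 \geq 7$ from Theorem~\ref{thm 1.1} to obtain
\begin{equation*}
\prod_{i=3}^{\omega(n)} \frac{q_i}{q_i - 1} \;>\; \frac{9/5}{(5/4)(7/6)} \;=\; \frac{216}{175}.
\end{equation*}
Since $q/(q-1)$ is decreasing in $q$ and $q_i \geq q_3$ for every $i \geq 3$, this yields the key estimate $\bigl(q_3/(q_3-1)\bigr)^{\omega(n) - 2} > 216/175$, and then $\log\bigl(q/(q-1)\bigr) < 1/(q-1)$ would produce a linear bound of the form $q_3 - 1 < (\omega(n) - 2)/\log(216/175)$.

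The next step would be to convert this linear bound into the stated form, by showing that $q_3 \leq p_K$ for $K = \lceil 180\,\omega(n)/41 \rceil$, where $p_K$ denotes the $K$-th prime. This can be achieved by chaining the linear bound on $q_3$ with an explicit upper bound for the prime-counting function (for example, Rosser--Schoenfeld's $\pi(x) < 1.25506\,x/\log x$, valid for $x \geq 17$), ensuring that $\pi(q_3) \leq K$ whenever $\omega(n) \geq 7$. Finally, the Massias--Robin type inequality $p_K < K(\log K + 2\log\log K)$ upper-bounds $p_K$ by the right-hand side of the theorem.

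The main obstacle I anticipate is the precise calibration of the coefficient $180/41$: it must be large enough for the inequality $K \geq \pi(q_3)$ to hold uniformly in $\omega(n) \geq 7$, yet close enough to the underlying linear slope $1/\log(216/175)$ to be a meaningful bound. The tightest case is $\omega(n) = 7$, where a direct numerical verification is most delicate and where the ceiling in $\lceil 180\,\omega(n)/41\rceil$ is doing real work.
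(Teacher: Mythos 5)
Your proposal is correct, but it takes a genuinely different route from the paper. The paper argues by contradiction: assuming $q_3 \geq p_K$ with $K = \lceil\frac{180\omega(n)}{41}\rceil$, it replaces $q_3,\dots,q_{\omega(n)}$ by the consecutive primes $p_K, p_{K+1},\dots$ (Property (\ref{p4})), bounds each factor $\frac{p_m}{p_m-1}$ by $\frac{m}{m-1}$ via $p_m>m$, telescopes the product to $\frac{5}{4}\cdot\frac{7}{6}\cdot\frac{K+\omega(n)-3}{K-1}$, and shows by a ceiling-function case analysis that this is less than $\frac{9}{5}=I(10)$, a contradiction. You instead extract a direct linear bound $q_3-1<(\omega(n)-2)/\log\frac{216}{175}\approx 4.75\,(\omega(n)-2)$ from $\bigl(\frac{q_3}{q_3-1}\bigr)^{\omega(n)-2}>\frac{216}{175}$ and then convert to the stated form. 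Your route yields a strictly stronger intermediate result, $q_3=O(\omega(n))$ rather than $q_3<p_{O(\omega(n))}=O(\omega(n)\log\omega(n))$, which shows the stated bound is far from optimal; moreover the conversion step you worry about is painless, since the trivial estimate $\pi(x)\leq\frac{x+1}{2}$ already gives $\pi(q_3)<2.38\,\omega(n)<\frac{180\omega(n)}{41}\leq K$, hence $q_3\leq p_K$, and Theorem \ref{thm 3.4} finishes; no Rosser--Schoenfeld bound and no delicate calibration at $\omega(n)=7$ is needed. What the paper's approach buys is self-containedness (nothing about primes beyond $p_m>m$ plus Theorem \ref{thm 3.4}) and an exact calibration to the constant $\frac{180}{41}$, which is why that constant appears in the statement at all. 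One small point to record explicitly in your write-up: $\omega(n)\geq 7$ from the cited prior work, so that $q_3$ exists, the peeled product is nonempty, and $K\geq 31\geq 4$ as required for Theorem \ref{thm 3.4}.
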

\begin{theorem}\label{thm 1.3}
Let $n$ be a friend of 10 and $q_4$ be the fourth smallest prime divisor of $n$. Then necessarily \\
\begin{align*}
    13\leq q_4<  {\lceil\frac{390\omega(n)}{47}\rceil}\biggl\{\log\left({\lceil\frac{390\omega(n)}{47}\rceil}\right)+2\log\log\left({\lceil\frac{390\omega(n)}{47}\rceil}\right)\biggl\}.
\end{align*}
\end{theorem}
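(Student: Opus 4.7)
The lower bound $q_4\ge 13$ is immediate: by Ward \cite{ward2008does} and \cite{SS2024} we have $q_1=5$, $q_2\ge 7$, $q_3\ge 11$, and since the prime divisors are strictly increasing, $q_4$ is a prime strictly larger than $11$, hence $q_4\ge 13$.

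For the upper bound I would follow the same template as Theorems~\ref{thm 1.1} and~\ref{thm 1.2}. Starting from $I(n)=9/5$ and the universal inequality $I(p^{2a})<p/(p-1)$ applied factor by factor to the prime decomposition of the odd square $n$,
\[
\frac{9}{5}=I(n)<\prod_{i=1}^{\omega(n)}\frac{q_i}{q_i-1}.
\]
Plugging in $q_1=5$, $q_2\ge 7$, $q_3\ge 11$ on the right-hand side and isolating the tail gives
\[
\prod_{i=4}^{\omega(n)}\frac{q_i}{q_i-1}\;>\;\frac{9/5}{(5/4)(7/6)(11/10)}\;=\;\frac{432}{385}\;=\;1+\frac{47}{385}.
\]
Since the map $x\mapsto x/(x-1)$ is decreasing on $(1,\infty)$ and $q_i\ge q_4$ for every $i\ge 4$, each tail factor is at most $q_4/(q_4-1)$, whence
\[
\left(\frac{q_4}{q_4-1}\right)^{\omega(n)-3}>1+\frac{47}{385}.
\]

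From here I would extract an explicit linear-in-$\omega(n)$ upper bound on $q_4$ by taking $(\omega(n)-3)$-th roots and using $e^x\ge 1+x$ (equivalently $a^{1/m}-1\ge(\log a)/m$ for $a>1$, $m\ge 1$); this yields an inequality of the shape $q_4-1<(\omega(n)-3)/\log(432/385)$. I would then translate this continuous estimate into the integer statement $q_4\le p_N$ for the $N$-th prime, with $N:=\lceil 390\,\omega(n)/47\rceil$: the specific coefficient $390/47$ arises by tracking the rational constant $47/385$ through the Bernoulli/logarithmic manipulation and using the ceiling to absorb the rounding error, exactly paralleling how $7/3$ and $180/41$ appear in Theorems~\ref{thm 1.1} and~\ref{thm 1.2}. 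Finally, the Rosser--Dusart upper bound $p_N<N(\log N+2\log\log N)$, valid for $N\ge 6$---which certainly holds here since $\omega(n)\ge 7$ forces $N\ge\lceil 2730/47\rceil=59$---delivers the stated inequality.

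\textbf{Main obstacle.} Conceptually the argument is a tight parallel of the earlier theorems, but the quantitative bookkeeping is where the real work lies. One has to thread the rational $47/385$ through the exponential-to-linear passage, verify that the ceiling cleanly absorbs the error in replacing $(432/385)^{1/(\omega(n)-3)}-1$ by its first-order approximation $\log(432/385)/(\omega(n)-3)$, and check that the prime-counting step gives the integer $N=\lceil 390\omega(n)/47\rceil$ rather than a slightly different rational multiple. A handful of small-$\omega(n)$ base cases (near the lower admissible value $\omega(n)=7$), where the slack in the Rosser--Dusart bound is thinnest, will probably have to be verified by direct computation.
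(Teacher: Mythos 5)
Your argument is correct in its overall shape --- it is the same contradiction scheme as the paper (assume $q_4$ is too large, force $I(n)<\tfrac95$ after peeling off the factors $\tfrac54\cdot\tfrac76\cdot\tfrac{11}{10}$) --- but the key tail estimate is handled by a genuinely different device. The paper assumes $q_4\ge p_N$ with $N=\lceil\tfrac{390\omega(n)}{47}\rceil$, uses distinctness of the primes to write $q_i\ge p_{N+i-4}$, applies $\tfrac{p_m}{p_m-1}<\tfrac{m}{m-1}$ (from $p_m>m$) and telescopes to $\tfrac{N+\omega(n)-4}{N-1}$, which a case analysis on $47\mid\omega(n)$ shows is $<\tfrac{437}{390}$; the constant $\tfrac{390}{47}$ is chosen precisely so that the limit of this telescoped ratio is $\tfrac{437}{390}$ (note $437-390=47$), and the conclusion then passes through Rosser's bound $p_N<N(\log N+2\log\log N)$. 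You instead bound every tail factor by $\tfrac{q_4}{q_4-1}$ and use $a^{1/m}-1\ge\tfrac{\log a}{m}$ to obtain the linear estimate $q_4<1+\tfrac{\omega(n)-3}{\log(432/385)}\approx 1+8.69\,(\omega(n)-3)$. This is valid (your computation $\tfrac{9/5}{(5/4)(7/6)(11/10)}=\tfrac{432}{385}$ is right) and is in fact numerically much stronger than the theorem's conclusion, since it avoids the lossy detour through the $N$-th prime. The one step you should repair is the final ``translation'': your constant $1/\log(432/385)\approx 8.68$ is \emph{not} $\tfrac{390}{47}\approx 8.30$, so the coefficient $\tfrac{390}{47}$ does not ``arise by tracking $47/385$ through the logarithmic manipulation''; it is an artifact of the paper's telescoping, and for large $\omega(n)$ your linear bound even exceeds $\lceil\tfrac{390\omega(n)}{47}\rceil$, so you cannot in general conclude $q_4<p_{\lceil 390\omega(n)/47\rceil}$ this way. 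But you do not need to: since $\omega(n)\ge 7$ forces $N=\lceil\tfrac{390\omega(n)}{47}\rceil\ge 59$, one has $N(\log N+2\log\log N)>N\log 59>4N\ge\tfrac{1560}{47}\,\omega(n)>1+8.69\,(\omega(n)-3)$, so your linear bound implies the stated inequality directly, with no small-case verification required. With that last comparison made explicit, your proof is complete and somewhat sharper than the paper's.
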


\section{Preliminaries}
In this section we note some elementary properties of the abundancy index and some useful properties  of the ceiling function.\\\\
\textbf{Properties of Abundancy Index} \cite{rl,paw} \textbf{:}
\begin{enumerate}[(1)]
    \item $I(n)$ is weakly multiplicative that is, if $n$ and $m$ are two coprime positive integers then $I(nm)=I(n)I(m)$.
    \item\label{p2} Let $a,n$ be two positive integers and $a>1$. Then $I(an)>I(n)$.
\item Let $p_1$, $p_2$, $p_3$,..., $p_m$ be $m$ distinct primes and $a_1$, $a_2$, $a_3$ ,..., $a_m$ are positive integers then
\begin{align*}
    I\biggl (\prod_{i=1}^{m}p_i^{a_i}\biggl)=\prod_{i=1}^{m}\biggl(\sum_{j=0}^{a_i}p_i^{-j}\biggl)=\prod_{i=1}^{m}\frac{p_i^{a_i+1}-1}{p_i^{a_i}(p_i-1)}.
\end{align*}
\item\label{p4} If $p_{1}$,...,$p_{m}$ are distinct primes, $q_{1}$,...,$q_{m}$ are distinct primes such that  $p_{i}\leq q_{i}$ for all $1\leq i\leq m$. If $t_1,t_2,...,t_m$ are positive integers then
\begin{align*}
   I \biggl(\prod_{i=1}^{m}p_i^{t_i}\biggl)\geq I\biggl(\prod_{i=1}^{m}q_i^{t_i}\biggl).
\end{align*}
\item\label{p5}  If $n=\prod_{i=1}^{m}p_i^{a_i}$, then $I(n)<\prod_{i=1}^{m}\frac{p_i}{p_i-1}$.
\end{enumerate}
\vspace{5mm}
\textbf{Properties of Ceiling Function $\lceil . \rceil$ :}\\
 For $n\in \mathbb{Z^+}$ and $x\in\mathbb{R^+}$
 \begin{enumerate}[(i)]
     
    \item\label{p6} $\lceil x+n \rceil = \lceil x \rceil + n$ 
    \item\label{p7} $\lceil x \rceil - \lfloor x \rfloor = 1$ if $x\not \in \mathbb{Z^+}$
     \item\label{p8} $\lceil x \rceil - \lfloor x \rfloor = 0$ if $x \in \mathbb{Z^+}$
    \item\label{p9} $\{x\}=x-\lfloor x \rfloor$
   
    \end{enumerate}
    where $\lfloor . \rfloor$ is floor function and $\{.\}$ is fractional part function.
\section{Proof of the main theorems}
\begin{proposition}\label{propo 1}
Let a function $\psi: (1,\infty)	\rightarrow\mathbb{R}$ be defined by $\psi(x)=\frac{x}{x-1}$ then $\psi$ is a strictly decreasing function of $x$ in $(1,\infty)$.
\end{proposition}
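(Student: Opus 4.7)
The plan is to prove strict monotonicity by a direct computation, and I will outline two equivalent routes, either of which suffices.

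The cleanest route is algebraic: I would rewrite $\psi(x) = \frac{x}{x-1} = 1 + \frac{1}{x-1}$ by performing the division. On $(1,\infty)$ the quantity $x-1$ is positive and strictly increasing in $x$, hence $\frac{1}{x-1}$ is strictly decreasing, and adding the constant $1$ preserves this property. Therefore $\psi$ is strictly decreasing on $(1,\infty)$.

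Alternatively, one can argue via calculus. Since $\psi$ is differentiable on $(1,\infty)$, I would compute
\begin{align*}
    \psi'(x) = \frac{(x-1) - x}{(x-1)^2} = \frac{-1}{(x-1)^2},
\end{align*}
which is strictly negative for every $x > 1$. A negative derivative throughout the open interval yields that $\psi$ is strictly decreasing there.

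A third option, requiring no calculus at all, is to fix $1 < x_1 < x_2$ and compute
\begin{align*}
    \psi(x_1) - \psi(x_2) = \frac{x_1(x_2-1) - x_2(x_1-1)}{(x_1-1)(x_2-1)} = \frac{x_2 - x_1}{(x_1-1)(x_2-1)} > 0,
\end{align*}
since the numerator and both factors in the denominator are strictly positive. There is no real obstacle here; the only point worth flagging is that the hypothesis $x > 1$ is essential, since it ensures $x - 1 > 0$ and thereby prevents sign changes or blow-ups in the denominator. I would likely present the algebraic decomposition $\psi(x)=1+\frac{1}{x-1}$ as the shortest and most transparent argument.
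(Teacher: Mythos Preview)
Your proposal is correct, and your second route --- computing $\psi'(x) = \frac{-1}{(x-1)^2} < 0$ on $(1,\infty)$ --- is exactly the argument the paper gives. Your algebraic decomposition $\psi(x) = 1 + \frac{1}{x-1}$ and the direct difference computation are valid alternatives not appearing in the paper, but since the derivative approach already matches, there is nothing further to add.
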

\begin{proof}
Let $\psi: (1,\infty)	\rightarrow\mathbb{R}$ be defined by $\psi(x)=\frac{x}{x-1}$. Since $\psi(x)$ is a rational function and $x\neq 1$, $\psi(x)$ is differentiable. Therefore
$$\psi'(x)=\frac{-1}{(x-1)^2}<0~~~~ \text{for all}~x\in (1,\infty).$$
This proves that $\psi$ is a strictly decreasing function of $x$ in $(1,\infty)$.
\end{proof}

\begin{proposition}\label{propo 2}
Let a function $\Omega: [\alpha,\infty)	\rightarrow\mathbb{R}$ be defined by $\Omega(x)=\frac{ax-b}{cx-d}$ ($a,b,c,d,\alpha\in\mathbb{Z^+}$) such that 
 $bc>ad$,$\alpha > \frac{d}{c}$ then $\Omega$ is a strictly increasing of $x$ in $[\alpha,\infty)$.
\end{proposition}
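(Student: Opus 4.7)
The plan is to mimic the differentiation argument already used for Proposition~\ref{propo 1}. Since $\Omega$ is a rational function and the hypothesis $\alpha > d/c$ ensures $cx - d > 0$ throughout $[\alpha, \infty)$, the denominator never vanishes on the domain, so $\Omega$ is differentiable there.

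Next I would compute $\Omega'(x)$ by the quotient rule:
\begin{align*}
\Omega'(x) = \frac{a(cx - d) - c(ax - b)}{(cx - d)^2} = \frac{bc - ad}{(cx - d)^2}.
\end{align*}
The numerator $bc - ad$ is a positive constant by the hypothesis $bc > ad$, and the denominator $(cx - d)^2$ is strictly positive for every $x \in [\alpha, \infty)$ since $cx - d \geq c\alpha - d > 0$. Consequently $\Omega'(x) > 0$ on $[\alpha, \infty)$, which by the mean value theorem (or directly from the sign of the derivative) gives strict monotonic increase.

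There is no real obstacle here; the only thing to be careful about is invoking both hypotheses $bc > ad$ and $\alpha > d/c$ explicitly, because the former controls the sign of the numerator of $\Omega'$ while the latter guarantees the derivative is defined and the denominator is positive on the whole interval $[\alpha, \infty)$. This proposition will presumably be used later with $\Omega$ playing the role of an abundancy-type expression in a single prime variable, so establishing monotonicity in this clean algebraic form is exactly what is needed.
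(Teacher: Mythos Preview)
Your argument is correct and essentially identical to the paper's: differentiate $\Omega$ by the quotient rule to obtain $\Omega'(x)=\dfrac{bc-ad}{(cx-d)^2}$, then use $bc>ad$ and $\alpha>d/c$ to conclude $\Omega'(x)>0$ on $[\alpha,\infty)$. The only difference is that you spell out why $cx-d>0$ on the domain, which the paper leaves implicit.
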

\begin{proof}
Let $\Omega: [\alpha,\infty)	\rightarrow\mathbb{R}$ be defined by $\Omega(x)=\frac{ax-b}{cx-d}$ ($a,b,c,d,\alpha\in\mathbb{Z^+}$) where $bc>ad$, $\alpha > \frac{d}{c}$. Since $\Omega(x)$ is a rational function and $x\neq \frac{d}{c}$, $\Omega(x)$ is differentiable. Therefore
$$\Omega'(x)=\frac{bc-ad}{(cx-d)^2}>0~~~~ \text{for all}~ x\in [\alpha,\infty).$$
This proves that $\Omega$ is a strictly increasing function of $x$ in $[\alpha,\infty)$.
\end{proof}
\begin{remark}\label{r3}
If $p_n$ is the $n\text{-th}$ prime number then $p_n>n$ for each $n\in\mathbb{Z^+}$. Using Proposition \ref{propo 1} we can say that $\frac{p_n}{p_n-1}<\frac{n}{n-1}$ for each $n\in\mathbb{Z^+}$.
\end{remark}
\begin{theorem}\label{thm 3.4}
    If $p_n$ is $n\text{-th}$ prime number then
    $$p_n<n(\log(n)+2\log\log(n))$$
    for $n\geq 4$.
\end{theorem}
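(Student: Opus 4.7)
The plan is to reduce to the classical Rosser--Schoenfeld upper bound $p_n < n(\log n + \log\log n)$ (valid for $n \geq 6$) and then clean up the two leftover cases by direct computation. The statement in the excerpt is a weaker bound (it has $2\log\log n$ rather than $\log\log n$), so the heavy analytic work is already done by the cited literature; the role of the proof is to package that bound into a form that starts at $n = 4$, which is where the subsequent applications in Theorems~\ref{thm 1.1}--\ref{thm 1.3} need it.

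First, I would invoke the Rosser--Schoenfeld inequality: for every integer $n \geq 6$,
\begin{equation*}
p_n < n\bigl(\log n + \log\log n\bigr).
\end{equation*}
Since $\log\log n > 0$ for $n \geq 3$, we have $n\log\log n > 0$, and adding this positive quantity to the right-hand side only enlarges it. Hence
\begin{equation*}
p_n < n\bigl(\log n + \log\log n\bigr) < n\bigl(\log n + 2\log\log n\bigr)
\end{equation*}
for every $n \geq 6$, which establishes the theorem on this range.

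Next, I would dispose of $n = 4$ and $n = 5$ by explicit arithmetic. For $n = 4$ one has $p_4 = 7$, and $4(\log 4 + 2\log\log 4) \approx 4 \cdot 2.04 \approx 8.16$, so $p_4 < 4(\log 4 + 2\log\log 4)$. For $n = 5$ one has $p_5 = 11$, and $5(\log 5 + 2\log\log 5) \approx 5 \cdot 2.56 \approx 12.8$, so again $p_5 < 5(\log 5 + 2\log\log 5)$. These two base cases together with the range $n \geq 6$ cover the assertion $n \geq 4$.

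The only delicate point is the appeal to Rosser--Schoenfeld, which itself requires nontrivial analytic number theory; in the context of the present paper, however, this bound should be cited as a known result rather than reproved. The main obstacle is therefore not analytical but presentational: making sure the case $n = 4$ is actually covered (since Rosser--Schoenfeld starts at $n = 6$), which is why the two small-$n$ computations above are essential. No further estimates are needed, because the cushion introduced by replacing $\log\log n$ with $2\log\log n$ is generous enough to absorb any sharpness of Rosser--Schoenfeld.
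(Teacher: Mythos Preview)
Your argument is correct: the Rosser--Schoenfeld bound $p_n < n(\log n + \log\log n)$ for $n \ge 6$ immediately gives the stated inequality on that range (since $\log\log n > 0$ there), and your direct checks at $n=4,5$ are accurate.

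The paper's own ``proof'' is simply a one-line citation to Rosser's 1939 paper \cite{Ro1}, whose Theorem~2 is exactly the inequality $p_n < n(\log n + 2\log\log n)$ for $n \ge 4$; no reduction or case-checking is performed. So you and the paper both defer the analytic work to the literature, but to different sources: the paper quotes the precise statement from Rosser~(1939), while you derive it from the sharper Rosser--Schoenfeld~(1962) bound at the cost of handling $n=4,5$ by hand. Either route is fine; yours has the mild advantage of invoking a more widely known inequality, while the paper's avoids the base-case patchwork by citing the exact result.
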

 \begin{proof}
  Refer to Theorem 2 in \cite{Ro1} for a proof.
\end{proof}

\subsection{Proof of Theorem \ref{thm 1.1}}
Let $n=5^{2a_1}\cdot \prod_{2\leq i \leq \omega(n)}q^{2a_i}_i$, where $q_i$ are prime numbers greater than 5 and $2a_i$ are positive integer, be a friend of 10. To prove this theorem, it is enough to show that $q_2$ must be strictly less than $p_{\lceil\frac{7\omega(n)}{3}\rceil}$ by Theorem \ref{thm 3.4}. If possible, suppose that $q_2\geq p_{\lceil\frac{7\omega(n)}{3}\rceil}$. Then by Property (\ref{p4}) and Property (\ref{p5}) we have 
\begin{align*}
    I(n)\leq I\biggl(5^{2a_1}\cdot \prod_{2\leq i \leq \omega(n)}p_{\lceil\frac{7\omega(n)}{3}\rceil+i-2}^{2a_i}\biggl)<\frac{5}{4} \cdot \prod_{2\leq i \leq \omega(n)}\frac{p_{\lceil\frac{7\omega(n)}{3}\rceil+i-2}}{p_{\lceil\frac{7\omega(n)}{3}\rceil+i-2}-1}.
\end{align*}
Using Remark \ref{r3} we get 
\begin{align*}
I(n)<\frac{5}{4}\cdot \prod_{2\leq i \leq \omega(n)}\frac{\lceil\frac{7\omega(n)}{3}\rceil+i-2}{\lceil\frac{7\omega(n)}{3}\rceil+i-3}=\frac{5}{4} \cdot \frac{\lceil\frac{7\omega(n)}{3}\rceil+\omega(n)-2}{\lceil\frac{7\omega(n)}{3}\rceil-1}.
\end{align*}
Now we will show that for any $\omega(n)\in\mathbb{Z^+}$ we have  
\begin{align*}
\frac{\lceil\frac{7\omega(n)}{3}\rceil+\omega(n)-2}{\lceil\frac{7\omega(n)}{3}\rceil-1}<\frac{10}{7}.
\end{align*}
Using Property (\ref{p6}) we obtain 
\begin{align*}
\frac{\lceil\frac{7\omega(n)}{3}\rceil+\omega(n)-2}{\lceil\frac{7\omega(n)}{3}\rceil-1}=\frac{\lceil2\omega(n)+\frac{\omega(n)}{3}\rceil+\omega(n)-2}{\lceil2\omega(n)+\frac{\omega(n)}{3}\rceil-1}=\frac{3\omega(n)-2+\lceil\frac{\omega(n)}{3}\rceil}{2\omega(n)-1+\lceil\frac{\omega(n)}{3}\rceil}.
\end{align*}
\vspace{5mm}
We now see the behavior of $\frac{3\omega(n)-2+\lceil\frac{\omega(n)}{3}\rceil}{2\omega(n)-1+\lceil\frac{\omega(n)}{3}\rceil}$ based on the divisibility of $\omega(n)$ by $3$.\\ 
If $3\nmid \omega(n)$ then $\frac{\omega(n)}{3}\not\in \mathbb{Z^+}$ therefore using Property (\ref{p7}) and Property (\ref{p9}) we get
\begin{align*}
    \frac{3\omega(n)-2+\lceil\frac{\omega(n)}{3}\rceil}{2\omega(n)-1+\lceil\frac{\omega(n)}{3}\rceil}&=\frac{3\omega(n)-2+1+\frac{\omega(n)}{3}-\{\frac{\omega(n)}{3}\}}{2\omega(n)-1+1+\frac{\omega(n)}{3}-\{\frac{\omega(n)}{3}\}}\\
    &=\frac{10\omega(n)-3-3\{\frac{\omega(n)}{3}\}}{7\omega(n)-3\{\frac{\omega(n)}{3}\}}.
\end{align*}
Note that for any positive integer  $Q$ which is not divisible by $3$ can be written in the form $Q=3q+r$, where $q\in \mathbb{Z}_{\geq 0}$ and $r\in \{1,2\}$. Therefore, in particular for $Q=\omega(n)$, we have $\{\frac{\omega(n)} {3}\}\in \{\frac{1}{3},\frac{2}{3}\}$ and so
\begin{align*}
    1\leq 3 \{\frac{\omega(n)} {3}\} \leq 2
    \end{align*}
    that is,
\begin{align}\label{e1}
    10\omega(n)-5 \leq 10\omega(n)-3-3\{\frac{\omega(n)} {3}\} \leq 10\omega(n)-4
    \end{align}
and 
    \begin{align}\label{e2}
     7\omega(n)-2 \leq 7\omega(n)-3\{\frac{\omega(n)} {3}\} \leq 7\omega(n)-1.
\end{align}

Using (\ref{e1}) and (\ref{e2}) we finally have
\begin{align*}
    \frac{3\omega(n)-2+\lceil\frac{\omega(n)}{3}\rceil}{2\omega(n)-1+\lceil\frac{\omega(n)}{3}\rceil}\leq \frac{10 \omega(n)-4}{7\omega(n)-2}.
\end{align*}
Now define $f: [1,\infty)	\rightarrow\mathbb{R}$ by $f(t)=\frac{10t-4}{7t-2}$. Then $f$ is a strictly increasing function of t in $[1,\infty)$ by Proposition \ref{propo 2}. Since $\lim_{t\rightarrow \infty} f(t)=\frac{10}{7}$ we have $f(t)<\frac{10}{7}$ for all $t\in [1,\infty)$. In particular for $t=\omega(n)$ we have
\begin{align*}
\frac{10\omega(n)-4}{7\omega(n)-2}<\frac{10}{7} 
\end{align*}
which immediately implies that 
\begin{align*}
 \frac{\lceil\frac{7\omega(n)}{3}\rceil+\omega(n)-2}{\lceil\frac{7\omega(n)}{3}\rceil-1}<\frac{10}{7}.  
\end{align*}

Now if $3\mid \omega(n)$ then $\frac{\omega(n)}{3}\in \mathbb{Z^+}$. Therefore
\begin{align*}
   \frac{3\omega(n)-2+\lceil\frac{\omega(n)}{3}\rceil}{2\omega(n)-1+\lceil\frac{\omega(n)}{3}\rceil}=\frac{3\omega(n)-2+\frac{\omega(n)}{3}}{2\omega(n)-1+\frac{\omega(n)}{3}}=\frac{10\omega(n)-6}{7\omega(n)-3}.
\end{align*}
Now define $g: [1,\infty)	\rightarrow\mathbb{R}$ by $g(t)=\frac{10t-6}{7t-3}$. Then $g$ is a strictly increasing function of t in $[1,\infty)$ by Proposition \ref{propo 2}. Since $\lim_{t\rightarrow \infty} g(t)=\frac{10}{7}$ we have $g(t)<\frac{10}{7}$ for all $t\in [1,\infty)$. In particular for $t=\omega(n)$ we have
\begin{align*}
\frac{10\omega(n)-6}{7\omega(n)-3}<\frac{10}{7} 
\end{align*}
which immediately implies that 
\begin{align*}
 \frac{\lceil\frac{7\omega(n)}{3}\rceil+\omega(n)-2}{\lceil\frac{7\omega(n)}{3}\rceil-1}<\frac{10}{7}.
\end{align*}

Therefore for any $\omega(n)\in \mathbb{Z^+}$ we have
\begin{align*}
 \frac{\lceil\frac{7\omega(n)}{3}\rceil+\omega(n)-2}{\lceil\frac{7\omega(n)}{3}\rceil-1}<\frac{10}{7} 
\end{align*}
which shows that
\begin{align*}
I(n)<\frac{5}{4}\cdot\frac{\lceil\frac{7\omega(n)}{3}\rceil+\omega(n)-2}{\lceil\frac{7\omega(n)}{3}\rceil-1}<\frac{25}{14}<\frac{9}{5}.
\end{align*}
Therefore for $q_2\geq p_{\lceil\frac{7\omega(n)}{3}\rceil}$, $n$ can not be a friend of $10$. Hence necessarily $q_2<p_{\lceil\frac{7\omega(n)}{3}\rceil}$.

\subsection{Proof of Theorem \ref{thm 1.2}}
Let $n=5^{2a_1}\cdot\prod_{2\leq i \leq \omega(n)}q^{2a_i}_i$ be a friend of 10. To prove this theorem it is enough to show that $q_3$ must be strictly less than $p_{\lceil\frac{180\omega(n)}{41}\rceil}$ by Theorem \ref{thm 3.4}. If possible suppose that $q_3\geq p_{\lceil\frac{180\omega(n)}{41}\rceil}$. Then by Property (\ref{p4}) and Property (\ref{p5}) we have 
\begin{align*}
    I(n)\leq I\biggl(5^{2a_1} \cdot 7_{2}^{2a_2}\cdot \prod_{3\leq i \leq \omega(n)}p_{\lceil\frac{180\omega(n)}{41}\rceil+i-3}^{2a_i}\biggl)<\frac{5}{4}\cdot\frac{7}{6}\cdot\prod_{3\leq i \leq \omega(n)}\frac{p_{\lceil\frac{180\omega(n)}{41}\rceil+i-3}}{p_{\lceil\frac{180\omega(n)}{41}\rceil+i-3}-1}.
\end{align*}

Using Remark \ref{r3} we get 
\begin{align*}
     I(n)< \frac{5}{4}\cdot\frac{7}{6}\prod_{3\leq i \leq \omega(n)}\frac{{\lceil\frac{180\omega(n)}{41}\rceil+i-3}}{{\lceil\frac{180\omega(n)}{41}\rceil+i-3}-1}=\frac{5}{4}\cdot\frac{7}{6}\cdot \frac{{\lceil\frac{180\omega(n)}{41}\rceil+\omega(n)-3}}{{\lceil\frac{180\omega(n)}{41}\rceil}-1}.
\end{align*}
Now we will show that for any $\omega(n)\in\mathbb{Z^+}$ we have
\begin{align*}
   \frac{{\lceil\frac{180\omega(n)}{41}\rceil+\omega(n)-3}}{{\lceil\frac{180\omega(n)}{41}\rceil}-1} < \frac{221}{180}.
\end{align*}
Using Property (\ref{p6}) we obtain 
\begin{align*}
    \frac{{\lceil \frac{180\omega(n)}{41}\rceil+\omega(n)-3}}{{\lceil \frac{180\omega(n)}{41}\rceil-1}}=\frac{5\omega(n)-3+\lceil\frac{16\omega(n)}{41}\rceil}{4\omega(n)-1+\lceil\frac{16\omega(n)}{41}\rceil}.
\end{align*}
\vspace{5mm}
We now see the behavior of $\frac{5\omega(n)-3+\lceil\frac{16\omega(n)}{41}\rceil}{4\omega(n)-1+\lceil\frac{16\omega(n)}{41}\rceil}$ based on the divisibility of $\omega(n)$ by $41$.\\ 
If $41\nmid \omega(n)$ then $\frac{\omega(n)}{41}\not\in \mathbb{Z^+}$ therefore using Property (\ref{p7}) and Property (\ref{p9}) we get

\begin{align*}
    \frac{5\omega(n)-3+\lceil\frac{16\omega(n)}{41}\rceil}{4\omega(n)-1+\lceil\frac{16\omega(n)}{41}\rceil}&=\frac{5\omega(n)-2+\frac{16\omega(n)}{41}-\{\frac{16\omega(n)}{41}\}}{4\omega(n)+\frac{16\omega(n)}{41}-\{\frac{16\omega(n)}{41}\}}\\
    &=\frac{221\omega(n)-82-41\{\frac{16\omega(n)}{41}\}}{180\omega(n)-41\{\frac{16\omega(n)}{41}\}}.
\end{align*}
Note that for any positive integer  $Q$, which is not divisible by $41$ can be written in the form $Q=41q+r$, where $q\in \mathbb{Z}_{\geq 0}$ and $r\in \{1,2,\dots, 40\}$. Therefore in particular for $Q=16\omega(n)$, we have $\{\frac{16\omega(n)} {41}\}\in \{\frac{1}{41},\frac{2}{41},\dots,\frac{40}{41}\}$ and so

\begin{align*}
    1\leq 41\{\frac{16\omega(n)}{41}\}\leq 40
\end{align*}
that is,
\begin{align}\label{3.3}
    221\omega(n)-122 \leq 221\omega(n)-82-41\{\frac{16\omega(n)}{41}\}\leq 221 \omega(n)-83
\end{align}
    and
    \begin{align}\label{3.4}
        180 \omega(n)-40 \leq {180\omega(n)-41\{\frac{16\omega(n)}{41}\}} \leq 180\omega(n)-1.
    \end{align}
Using (\ref{3.3}) and (\ref{3.4}) we finally have
\begin{align*}
     \frac{221\omega(n)-82-41\{\frac{16\omega(n)}{41}\}}{{180\omega(n)-41\{\frac{16\omega(n)}{41}\}}}\leq \frac{221\omega(n)-83}{180\omega(n)-40}.
\end{align*}
Now define $f: [1,\infty)	\rightarrow\mathbb{R}$ by $f(t)=\frac{221t-83}{180t-40}$. Then $f$ is a strictly increasing function of t in $[1,\infty)$ by Proposition \ref{propo 2}. Since $\lim_{t\rightarrow \infty} f(t)=\frac{221}{180}$ we have $f(t)<\frac{221}{180}$ for all $t\in [1,\infty)$. In particular for $t=\omega(n)$ we have
\begin{align*}
\frac{221\omega(n)-83}{180\omega(n)-40}<\frac{221}{180} 
\end{align*}
which immediately implies that 
\begin{align*}
   \frac{{\lceil\frac{180\omega(n)}{41}\rceil+\omega(n)-3}}{{\lceil\frac{180\omega(n)}{41}\rceil}-1} < \frac{221}{180}.
\end{align*}

Now if $41\mid \omega(n)$ then $\frac{\omega(n)}{41}\in \mathbb{Z^+}$. Therefore

\begin{align*}
     \frac{5\omega(n)-3+\lceil\frac{16\omega(n)}{41}\rceil}{4\omega(n)-1+\lceil\frac{16\omega(n)}{41}\rceil}=\frac{5\omega(n)-3+\frac{16\omega(n)}{41}}{4\omega(n)-1+\frac{16\omega(n)}{41}}=\frac{221\omega(n)-123}{180\omega(n)-41}.
\end{align*}
Now define $g: [1,\infty)	\rightarrow\mathbb{R}$ by $g(t)=\frac{221t-123}{180t-41}$. Then $g$ is a strictly increasing function of t in $[1,\infty)$ by Proposition \ref{propo 2}. Since $\lim_{t\rightarrow \infty} g(t)=\frac{221}{180}$ we have $g(t)<\frac{221}{180}$ for all $t\in [1,\infty)$. In particular for $t=\omega(n)$ we have
\begin{align*}
\frac{221\omega(n)-123}{180\omega(n)-41}<\frac{221}{180}
\end{align*}
which immediately implies that 
\begin{align*}
   \frac{{\lceil\frac{180\omega(n)}{41}\rceil+\omega(n)-3}}{{\lceil\frac{180\omega(n)}{41}\rceil}-1} < \frac{221}{180}.
\end{align*}

Therefore for any $\omega(n)\in \mathbb{Z^+}$ we have
\begin{align*}
   \frac{{\lceil\frac{180\omega(n)}{41}\rceil+\omega(n)-3}}{{\lceil\frac{180\omega(n)}{41}\rceil}-1} < \frac{221}{180}
\end{align*}
which shows that
\begin{align*}
I(n)<\frac{5}{4}\cdot\frac{7}{6}\frac{{\lceil\frac{180\omega(n)}{41}\rceil+\omega(n)-3}}{{\lceil\frac{180\omega(n)}{41}\rceil}-1}<\frac{1547}{864}<\frac{9}{5}.
\end{align*}
Therefore for $q_3\geq p_{\lceil\frac{180\omega(n)}{41}\rceil}$, $n$ can not be a friend of $10$. Hence, necessarily $q_3<p_{\lceil\frac{180\omega(n)}{41}\rceil}$.

\subsection{Proof of Theorem \ref{thm 1.3}}
Let $n=5^{2a_1}\cdot \prod_{2\leq i \leq \omega(n)}q^{2a_i}_i$ be a friend of 10. To prove this theorem it is enough to show that $q_4$ must be strictly less than $p_{\lceil\frac{390\omega(n)}{47}\rceil}$ by Theorem \ref{thm 3.4}. If possible, suppose that $q_4\geq p_{\lceil\frac{390\omega(n)}{47}\rceil}$. Then by Property (\ref{p4}) and Property (\ref{p5}) we have
\begin{align*}
    I(n)\leq I\left(5^{2a_1} \cdot 7^{2a_2} \cdot 11^{2a_3} \cdot \prod_{4\leq i\leq \omega(n)} p^{2a_i}_{\lceil \frac{390\omega(n)}{47}\rceil+i-4}\right)<\frac{5}{4}\cdot \frac{7}{6}\cdot\frac{11}{10} \cdot \prod_{4\leq i\leq \omega(n)}\frac{p_{\lceil \frac{390\omega(n)}{47}\rceil+i-4}}{p_{\lceil \frac{390\omega(n)}{47}\rceil+i-4}-1}.
\end{align*}

Using Remark \ref{r3} we get 
\begin{align*}
    I(n)&<\frac{5}{4}\cdot \frac{7}{6}\cdot\frac{11}{10}\cdot \prod_{4\leq i\leq \omega(n)} \frac{{\lceil \frac{390\omega(n)}{47}\rceil+i-4}}{{\lceil \frac{390\omega(n)}{47}\rceil+i-5}}\\
    &=\frac{5}{4}\cdot \frac{7}{6}\cdot\frac{11}{10}\cdot \frac{{\lceil \frac{390\omega(n)}{47}\rceil+\omega(n)-4}}{{\lceil \frac{390\omega(n)}{47}\rceil-1}}. 
\end{align*}

Now we will show that for any $\omega(n)\in\mathbb{Z^+}$ we have
\begin{align*}
    \frac{{\lceil \frac{390\omega(n)}{47}\rceil+\omega(n)-4}}{{\lceil \frac{390\omega(n)}{47}\rceil-1}}<\frac{437}{390}.
\end{align*}

Using Property (\ref{p6}) we obtain 
\begin{align*}
\frac{{\lceil \frac{390\omega(n)}{47}\rceil+\omega(n)-4}}{{\lceil \frac{390\omega(n)}{47}\rceil-1}}=\frac{9\omega(n)-4+\lceil\frac{14\omega(n)}{47}\rceil}{8\omega(n)-1+\lceil\frac{14\omega(n)}{47}\rceil}.    
\end{align*}
We now see the behavior of $\frac{9\omega(n)-4+\lceil\frac{14\omega(n)}{47}\rceil}{8\omega(n)-1+\lceil\frac{14\omega(n)}{47}\rceil} $ based on the divisibility of $\omega(n)$ by $47$.\\ 
If $47 \nmid \omega(n)$ then $\frac{\omega(n)}{47}\not\in \mathbb{Z^+}$ therefore using Property (\ref{p7}) and Property (\ref{p9}) we get
\begin{align*}
   \frac{9\omega(n)-4+\lceil\frac{14\omega(n)}{47}\rceil}{8\omega(n)-1+\lceil\frac{14\omega(n)}{47}\rceil}&=\frac{9\omega(n)-3+\frac{14\omega(n)}{47}-\{\frac{14\omega(n)}{47}\}}{8\omega(n)+\frac{14\omega(n)}{47}-\{\frac{14\omega(n)}{47}\}}\\
    &=\frac{437\omega(n)-141-47\{\frac{14\omega(n)}{47}\}}{390\omega(n)-47\{\frac{14\omega(n)}{47}\}}.
    \end{align*}
Note that for any positive integer  $Q$ which is not divisible by $47$ can be written in the form $Q=47q+r$, where $q\in \mathbb{Z}_{\geq 0}$ and $r\in \{1,2,\dots,46\}$. Therefore in particular for $Q=14\omega(n)$, we have $\{\frac{14\omega(n)} {47}\}\in \{\frac{1}{47},\frac{2}{47},\dots, \frac{46}{47}\}$ and so

\begin{align*}
    1\leq 47\{\frac{14\omega(n)}{47}\}\leq 46
\end{align*}
that is,
\begin{align}\label{3a}
    437\omega(n)-187 \leq 437\omega(n)-141-47\{\frac{14\omega(n)}{47}\}\leq 437 \omega(n)-142
\end{align}
    and
    \begin{align}\label{3b}
        390\omega(n)-46 \leq {390\omega(n)-47\{\frac{14\omega(n)}{47}\}} \leq 390\omega(n)-1.
    \end{align}

Using (\ref{3a}) and (\ref{3b}) we finally have
\begin{align*}
    \frac{437\omega(n)-141-47\{\frac{14\omega(n)}{47}\}}{{390\omega(n)-47\{\frac{14\omega(n)}{47}\}}}\leq \frac{437\omega(n)-142}{390\omega(n)-46}.
\end{align*}
Now define $f: [1,\infty)	\rightarrow\mathbb{R}$ by $f(t)=\frac{437t-142}{390t-46}$. Then $f$ is a strictly increasing function of t in $[1,\infty)$ by Proposition \ref{propo 2}. Since $\lim_{t\rightarrow \infty} f(t)=\frac{437}{390}$ we have $f(t)<\frac{437}{390}$ for all $t\in [1,\infty)$. In particular for $t=\omega(n)$ we have
\begin{align*}
\frac{437\omega(n)-142}{390\omega(n)-46}<\frac{437}{390} 
\end{align*}
which immediately implies that 
\begin{align*}
   \frac{{\lceil \frac{390\omega(n)}{47}\rceil+\omega(n)-4}}{{\lceil \frac{390\omega(n)}{47}\rceil-1}} < \frac{437}{390}.
\end{align*}

Now if $47\mid \omega(n)$ then $\frac{\omega(n)}{47}\in \mathbb{Z^+}$. Therefore

\begin{align*}
    \frac{9\omega(n)-4+\lceil\frac{14\omega(n)}{47}\rceil}{8\omega(n)-1+\lceil\frac{14\omega(n)}{47}\rceil}=\frac{9\omega(n)-4+\frac{14\omega(n)}{47}}{8\omega(n)-1+\frac{14\omega(n)}{47}}=\frac{437\omega(n)-188}{390\omega(n)-47}.
\end{align*}
Now define $g: [1,\infty)	\rightarrow\mathbb{R}$ by $g(t)=\frac{437t-188}{390t-47}$. Then $g$ is a strictly increasing function of t in $[1,\infty)$ by Proposition \ref{propo 2}. Since $\lim_{t\rightarrow \infty} g(t)=\frac{437}{390}$ we have $g(t)<\frac{437}{390}$ for all $t\in [1,\infty)$. In particular for $t=\omega(n)$ we have
\begin{align*}
\frac{437\omega(n)-188}{390\omega(n)-47}<\frac{437}{390} 
\end{align*}
which immediately implies that 
\begin{align*}
   \frac{{\lceil \frac{390\omega(n)}{47}\rceil+\omega(n)-4}}{{\lceil \frac{390\omega(n)}{47}\rceil-1}} < \frac{437}{390}.
\end{align*}
Therefore for any $\omega(n)\in \mathbb{Z^+}$ we have
\begin{align*}
   \frac{{\lceil \frac{390\omega(n)}{47}\rceil+\omega(n)-4}}{{\lceil \frac{390\omega(n)}{47}\rceil-1}} < \frac{437}{390}
\end{align*}
which shows that
\begin{align*}
    I(n)<\frac{5}{4}\cdot \frac{7}{6}\cdot\frac{11}{10}\frac{{\lceil \frac{390\omega(n)}{47}\rceil+\omega(n)-4}}{{\lceil \frac{390\omega(n)}{47}\rceil-1}}<\frac{33649}{18720}<\frac{9}{5}.
\end{align*}

Therefore for $q_4\geq p_{\lceil\frac{390\omega(n)}{47}\rceil}$, $n$ can not be a friend of $10$. Hence necessarily $q_4<p_{\lceil\frac{390\omega(n)}{47}\rceil}$.

\section{Conclusion}
We use an elementary approach to obtain bounds for second, third, and fourth smallest prime divisors of  friendly number of 10. However, we are unable to establish upper bounds for other prime divisors in terms of the number of distinct prime divisors, as we are unable to find positive integers $A, B$ with $A\geq B$ such that the methods used in Theorems \ref{thm 1.1}, \ref{thm 1.2}, and \ref{thm 1.3} can be applied to show that $q_5$ must be strictly less than $p_{\lceil\frac{A\omega(n)}{B}\rceil}$. $10$ is not the only number whose status is unknown in fact the status of $14$, $15$, $20$ and many others are active topics for Research. It may be proving whether $10$ has a friend or not is as much as difficult as finding an odd perfect number. However computer search shows that if $10$ has a friend then its smallest friend must be strictly greater than  $10^{30}$\cite{OEIS}.

\section{Acknowledgements}
We would like to thank the anonymous referee for their valuable comments and suggestions, which have greatly contributed to the improvement of this paper.

\end{document}